\documentclass[10pt]{amsart}
\usepackage[cp1251]{inputenc}
\usepackage[english,russian]{babel}
\usepackage{amsmath}
\usepackage{amssymb}
\usepackage{amsfonts}

\setcounter{page}{144}
\newtheorem{lemma}{Lemma}
\newtheorem{theorem}{Theorem}

\newtheorem{corollary}{Corollary}
\newtheorem{proposition}{Proposition}
\newcommand{\PGL}{\mathrm{PGL}}
\newcommand{\GL}{\mathrm{GL}}
\newcommand{\Sym}{\mathrm{Sym}}

\begin{document}

\renewcommand{\refname}{References}
\renewcommand{\proofname}{Proof.}

\title[]{Perfect codes from PGL(2,5) in Star graphs}
\author{{I.Yu.Mogilnykh}}%
\address{Ivan Yurevich Mogilnykh
\newline\hphantom{iii} Novosibirsk State University,
\newline\hphantom{iii} Pirogova st., 1,
\newline\hphantom{iii} 630090, Novosibirsk, Russia
\newline\hphantom{iii} Sobolev Institute of Mathematics,
\newline\hphantom{iii} pr. Koptyuga, 4,
\newline\hphantom{iii} 630090, Novosibirsk, Russia}%

\email{ivmog84@gmail.com}

\thanks{\sc Mogilnykh, I. Yu.,
Perfect codes from PGL(2,5) in Star graphs}
\thanks{\copyright \ 2019 Mogilnykh I.Yu.}
\thanks{\rm  This work was partially supported by the Grant RFBR
18-01-00353A}
\thanks{\it Received  January, 1, 2015, published  March, 1,  2015.}%
\maketitle

\vspace{1cm} \maketitle {\small
\begin{quote}
\noindent{\sc Abstract.} The Star graph $S_n$ is the Cayley graph of the symmetric group
$\Sym_n$ with the generating set $\{(1\mbox{ }i): 2\leq i\leq n \}$.
 Arumugam and Kala proved that $\{\pi\in  \Sym_n: \pi(1)=1\}$ is a perfect code in $S_n$ for any $n, n\geq 3$. In this note we
 show that for any $n, n\geq 6$ the Star graph $S_n$ contains a perfect code
 which is a union of cosets of the embedding of $\PGL(2,5)$ into $\Sym_6$.

\medskip

\noindent{\bf Keywords:} perfect code, efficient dominating set, Cayley graph, Star graph, projective linear group, symmetric group.
 \end{quote}
}

\section{Introduction}

Let $G$ be a group with an inverse-closed generating set $H$ that does not contain the identity.
 {\it The Cayley graph} $\Gamma(G,H)$ is the graph whose
 vertices are the elements of $G$ and the edge set is $\{(hg,g):g \in G,h\in
 H\}$. The  {\it symmetric group} of degree $n$ is denoted by $\Sym_n$. The stabilizer of an element $i\in\{1,\ldots,n\}$ by $\Sym_n$ is denoted by $Stab_i(\Sym_n)$.
{\it The Star graph} $S_n$ is $\Gamma(\Sym_n,\{(1\mbox{ }i): 2\leq i\leq n \})$.

A {\it code} in a graph $G$ is a subset of its vertices. The {\it size} of $C$ is $|C|$.
The {\it minimum distance} of a code is $d=min_{x,y\in C,x\neq y}d(x,y)$, where $d(x,y)$ is the
length of a shortest path connecting $x$ and $y$. A code $C$ is
{\it perfect} (also known as efficient dominating set) in a
$k$-regular graph $\Gamma$ with vertex set $V$ if it has minimum distance 3 and the size of
$C$ attains {\it the Hamming upper bound}, i.e.
$|C|=|V|/(k+1)$. We say that two codes in a graph $\Gamma$ are {\it isomorphic} if there is an automorphism of the graph $\Gamma$  that maps one code into another.

Let $T_0$, $T_1$ be distinct subsets of vertices of a graph $\Gamma$. The ordered pair $(T_0,T_1)$ is called a {\it perfect bitrade}, if for any vertex $x$, the set consisting of $x$ and its neighbors in $\Gamma$ meets $T_0$ and $T_1$ in the same number of vertices  that is zero or one.
The size of $|T_0|$ is called the {\it volume} of the bitrade.
In particular, if $C$ and $C'$ are perfect codes in $\Gamma$, then $(C\setminus C',C'\setminus C)$ is a perfect bitrade. In this case the bitrade $(C\setminus C',C'\setminus C)$ is called {\it embeddable} into a perfect code. 
 In general, bitrades (non necessarily perfect) are often associated with classical combinatorial objects such as perfect codes, Steiner triple and quadruple systems and latin squares  (e.g. see a survey \cite{Kr}).  
 Bitrades are used in constuctions of  the parent combinatorial
objects or for obtaining upper bounds on their number.

The first well-known error-correcting code was the binary Hamming code. This code is a perfect code in the Hamming graph, which is a Cayley graph of the group $Z_2^n$.
Later in \cite{Va} Vasiliev showed that there are perfect codes that are nonisomorphic to the Hamming codes.  A somewhat similar fact holds for the Star graph as in Section 3 we show that there are perfect codes nonisomorphic to the first series of perfect codes in the Star graph from \cite{AK}.

Generally speaking, the permutation codes are subsets of $\Sym_n$ with
 respect to a certain metric. These codes are of practical interest for their various applications in areas such as flash memory storage \cite{JS} and interconnection
 networks \cite{AKrishna}. The permutation codes with the Kendall $\tau$-metric (i.e. codes in the {\it bubble-sort graph} $\Gamma(\Sym_n,\{(i\mbox{ }i+1): 1\leq i\leq n-1
 \})$) were considered by Etzion and Buzaglo in \cite{Et}. They showed that no perfect codes in these graphs exist when $n$ is prime or $4\leq n\leq 10$. In \cite{DT} the nonexistence of the perfect codes in the Cayley graphs $\Gamma(\Sym_n,H)$ was
 established, where $H$ are transpositions that form a tree of
 diameter 3.

The spectral graph theory is important from the point of view of coding theory. 
In particular, according to the famous Lloyd's theorem the
existence of a perfect code in a regular graph necessarily implies
that $-1$ is an eigenvalue of the graph. The integrity of the spectra of several classes of Cayley graphs of the symmetric and the alternating groups was proven in
\cite{R}. The eigenvalues of $S_n$ are all integers $i, -(n-1)\leq
i \leq (n-1)$  that follows from the spectra of the
Jucys-Murphy elements \cite{Ch}. The multiplicities of the eigenvalues of $S_n$ were
studied in \cite{AKK} and the second largest eigenvalue $n-2$
was shown to have multiplicity $(n-1)(n-2)$. In \cite{PI} an
explicit basis for the eigenspace with eigenvalue $n-2$ was found
and a reconstruction property for eigenvectors by its partial
values was proven. Later in \cite{MI} it is shown that the basis consists of eigenvectors with minimum support.

For $l, r \in \Sym_n$ define the following mapping on the vertices of $S_n$: $\lambda_{l,r}(g)=lgr$, $g$ in $\Sym_n$.
\begin{theorem}\label{Feng}\cite{Fe}
The automorphism group of $S_n$ is $\{\lambda_{l,r}:l \in Stab_1(\Sym_n), r\in \Sym_n \}$.
\end{theorem}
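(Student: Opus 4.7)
The plan is to follow the standard strategy for Cayley graphs: verify that the listed maps are automorphisms, reduce to the stabilizer of the identity via vertex-transitivity, and then use the girth-$6$ structure of $S_n$ to pin down this stabilizer.

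First I would check the easy direction. Right multiplication $g\mapsto gr$ preserves every Cayley graph. Left multiplication by $l$ sends the edge $(hg,g)$ to $\bigl((lhl^{-1})lg,lg\bigr)$, so it is an automorphism of $S_n$ iff conjugation by $l$ permutes the generating set; since $l(1\ i)l^{-1}=(l(1)\ l(i))$, this is equivalent to $l(1)=1$.

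For the converse, by vertex-transitivity it suffices to determine $\mathrm{Aut}(S_n)_e$. The subgroup $\{\lambda_{l,l^{-1}}:l\in Stab_1(\Sym_n)\}\cong\Sym_{n-1}$ already realizes every permutation of the neighborhood $H=\{(1\ 2),\ldots,(1\ n)\}$ of $e$ by conjugation, so any $\phi\in\mathrm{Aut}(S_n)_e$ can be multiplied by an appropriate $\lambda_{l,l^{-1}}$ to fix $H$ pointwise. It is therefore enough to show that such a $\phi$ fixing $e$ and $H$ pointwise is trivial. The decisive fact is that $S_n$ is bipartite of girth $6$ and that each pair of edges at a vertex lies on a unique $6$-cycle through it; explicitly, the $6$-cycle through $e$ in the directions $(1\ i)$ and $(1\ j)$ consists of the vertices $e,(1\ i),(1\ i\ j),(i\ j),(1\ j\ i),(1\ j)$. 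Since $\phi$ fixes the ordered edge $(e,(1\ i))$ of this cycle and $\mathrm{Aut}(C_6)$ acts regularly on ordered edges, $\phi$ is the identity on the cycle; ranging over all pairs $\{i,j\}$ shows $\phi|_{B_2(e)}=\mathrm{id}$.

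Finally, I would propagate by induction on the distance from $e$: assuming $\phi$ is the identity on the ball $B_d(e)$ with $d\geq 1$, for each $v$ at distance $d+1$ from $e$ pick a neighbor $u\in B_d(e)$ of $v$ and a neighbor $w$ of $u$ with $d(w,e)=d-1$. Both $u$ and $w$ are fixed and adjacent, so the unique $6$-cycle through $u$ in the directions of $v$ and $w$ contains the fixed ordered edge $(u,w)$; again, fixing an oriented edge forces the whole cycle, and in particular $v$, to be fixed. The technical heart of the argument is the girth-$6$ observation together with the uniqueness of the $6$-cycle through three consecutive vertices; once these are in hand, the induction and the counting $|\mathrm{Aut}(S_n)|=(n-1)!\cdot n!$ reduce to routine bookkeeping.
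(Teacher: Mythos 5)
The paper does not prove this theorem; it is quoted from Feng's article \cite{Fe}, so your argument has to stand on its own. Most of it does: the characterization of which left multiplications are automorphisms, the reduction to the stabilizer of the identity via right translations, the use of $\{\lambda_{l,l^{-1}}:l\in Stab_1(\Sym_n)\}$ to make $\phi$ fix the neighborhood $H$ of $e$ pointwise, and the identification of the unique $6$-cycle $e,(1\ i),(1\ i\ j),(i\ j),(1\ j\ i),(1\ j)$ through the path $(1\ i),e,(1\ j)$ are all correct. But note what actually drives the base step: $\phi$ fixes the \emph{two} adjacent edges $\{e,(1\ i)\}$ and $\{e,(1\ j)\}$, and the $6$-cycle through a pair of edges at a vertex is unique, so $\phi(C)=C$; only then does "an automorphism of a hexagon fixing a vertex and both its neighbors is trivial" apply. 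Fixing a single oriented edge does not, by itself, force any particular $6$-cycle through it to be preserved, since $n-2$ distinct $6$-cycles through $u$ contain a given edge at $u$.

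This is exactly where the induction step breaks. For $v$ at distance $d+1$ with neighbor $u$ at distance $d$ and $w$ a neighbor of $u$ at distance $d-1$, the $6$-cycle $D$ you invoke is determined by the pair of edges $\{u,v\}$ and $\{u,w\}$; the automorphism $\phi$ sends $D$ to the unique $6$-cycle through $\{u,\phi(v)\}$ and $\{u,w\}$, and $\phi(v)=v$ is precisely what you are trying to prove, so you cannot conclude $\phi(D)=D$ from the fixed edge $(u,w)$ alone. Nor can you always find a second fixed edge at $u$: by bipartiteness the neighbors of $u$ lie at distances $d\pm 1$ from $e$, and $u$ may have a unique neighbor at distance $d-1$ (already $u=(1\ 2\ 3)$ at distance $2$ has $(1\ 2)$ as its only neighbor at distance $1$). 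The standard repair is to propagate closed neighborhoods instead of balls: if $\phi$ fixes $N[u]$ pointwise, then for every neighbor $x$ of $u$ and every neighbor $z\neq u$ of $x$, the vertex $z$ lies on the unique $6$-cycle through the path $x,u,y$ for a suitable third neighbor $y$ of $u$ (for $u=e$, $x=(1\ i)$, $z=(1\ i\ k)$ take $y=(1\ k)$); that cycle contains the fixed path $y,u,x$, hence is fixed pointwise, so $N[x]$ is fixed pointwise. Connectivity then gives $\phi=\mathrm{id}$. With that replacement the proof goes through; as written, the inductive step does not.
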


In \cite{AK}  Arumugam and Kala showed that $Stab_1(\Sym_n)$ 
is  a perfect code in $S_n$, for any $n\geq 3$ .  Consider the isomorphism class of $Stab_1(\Sym_n)$ in $S_n$.  By Theorem \ref{Feng} the only left  multiplication automorphisms are those by 
the elements from $Stab_1(\Sym_n)$. Therefore we have the following result.

\begin{corollary}
\label{CoroFeng}
The isomorphism class of $Stab_1(\Sym_n)$in $S_n$ is the set of its right cosets in $\Sym_n$.
\end{corollary}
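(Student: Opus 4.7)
The plan is to identify the isomorphism class of $Stab_1(\Sym_n)$ with its orbit under $\mathrm{Aut}(S_n)$ and then to compute this orbit explicitly using Theorem \ref{Feng}. By definition, a code $C'$ is isomorphic to $C = Stab_1(\Sym_n)$ iff there is an automorphism $\phi$ of $S_n$ with $\phi(C)=C'$, so it suffices to describe the set $\{\phi(C):\phi\in \mathrm{Aut}(S_n)\}$.

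First I would invoke Theorem \ref{Feng} to write every automorphism of $S_n$ as $\lambda_{l,r}$ with $l\in Stab_1(\Sym_n)$ and $r\in \Sym_n$. Next I would compute the image of $C$ as a set:
\begin{equation*}
\lambda_{l,r}(Stab_1(\Sym_n)) = l\cdot Stab_1(\Sym_n)\cdot r.
\end{equation*}
The key observation is that $Stab_1(\Sym_n)$ is a subgroup, and since $l$ lies in this subgroup, left multiplication by $l$ simply permutes its elements; hence $l\cdot Stab_1(\Sym_n)=Stab_1(\Sym_n)$. Therefore $\lambda_{l,r}(Stab_1(\Sym_n))=Stab_1(\Sym_n)\cdot r$, a right coset.

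Conversely, every right coset $Stab_1(\Sym_n)\cdot r$ arises as $\lambda_{e,r}(Stab_1(\Sym_n))$, so the orbit is exactly the collection of right cosets of $Stab_1(\Sym_n)$ in $\Sym_n$. Combining the two inclusions yields the corollary.

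There is essentially no serious obstacle here: the statement is a direct structural consequence of the description of $\mathrm{Aut}(S_n)$ provided by Theorem \ref{Feng}. The only point worth checking carefully is the absorption $l\cdot Stab_1(\Sym_n)=Stab_1(\Sym_n)$ for $l\in Stab_1(\Sym_n)$, which is immediate from the subgroup property. Implicitly, this also confirms that each right coset is itself a perfect code in $S_n$, as it is the image under a graph automorphism of the perfect code of Arumugam and Kala.
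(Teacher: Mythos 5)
Your proof is correct and follows exactly the paper's (very brief) argument: invoke Theorem \ref{Feng} to write each automorphism as $\lambda_{l,r}$ with $l\in Stab_1(\Sym_n)$, note that $l$ is absorbed by the subgroup, and conclude the orbit is the set of right cosets. You simply spell out the absorption step and the converse inclusion more explicitly than the paper does.
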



In Section 2 we prove that the projective linear group $\PGL(2,5)$ is  a
perfect code, which is isomorphic to $\{\pi\in
\Sym_6: \pi(1)=1\}$ as a group via an outer automorphism of
$\Sym_6$, but is nonisomorphic to it with respect to the
automorphism group of the Star graph. We continue the study in
Section 3 where we construct a new series of  perfect codes in Star
graphs $S_n$, $n\geq 7$ using cosets of $\PGL(2,5)$. Also we obtain the classification of the isomorphism classes of perfect codes and perfect bitrades in Star graphs $S_n$, $n\leq 6$ by linear programming.

\section{Perfect codes from $\PGL(2,5)$ in $S_6$ }
The action of a group $G$ on a set $M$ is {\it regular} if it is
transitive and $|G|=|M|$, i.e. for any $x$, $y\in M$ there is exactly one element
of $G$ sending $x$ to $y$.

Let $\PGL(n,q)$ be the projective linear group induced by the action of
$\GL(n,q)$ on the 1-dimensional subspaces (projective points) of a
$n$-dimensional space over the field of order $q$. It is well known that $\PGL(n,q)$  acts transitively on the ordered pairs of distinct projective
points for $n\geq 3$ and regularly on the ordered  triples of pairwise distinct projective points when
$n=2$, see e.g. \cite{Dixon}[Exercises 2.8.2 and 2.8.7].

\begin{proposition}\label{PGLprop}
The group $\PGL(2,q)$ acts regularly on the ordered triples of distinct
projective points.
\end{proposition}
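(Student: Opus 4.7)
The plan is to reduce regularity to transitivity by a cardinality argument, then prove transitivity by exhibiting an explicit transformation. Since the projective line $\mathbb{P}^1(\mathbb{F}_q)$ has $q+1$ points, the number of ordered triples of pairwise distinct projective points is exactly $(q+1)q(q-1)$. On the other hand, $|\GL(2,q)|=(q^2-1)(q^2-q)$, and the scalar subgroup has order $q-1$, so $|\PGL(2,q)|=(q+1)q(q-1)$. Because the group and the set of ordered triples have equal cardinality, any transitive action on triples is automatically regular: by orbit--stabilizer the stabilizer of any triple is trivial.

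To establish transitivity, I would send an arbitrary ordered triple $(a,b,c)$ of pairwise distinct points of $\mathbb{F}_q\cup\{\infty\}$ to the canonical triple $(0,1,\infty)$. When $a,b,c$ are all finite the cross-ratio Möbius transformation
\[ \phi(z)=\frac{(z-a)(b-c)}{(z-c)(b-a)} \]
does the job: direct substitution gives $\phi(a)=0$, $\phi(b)=1$, $\phi(c)=\infty$. The $2\times 2$ matrix representing $\phi$ has determinant (up to a nonzero scalar) equal to $(a-b)(b-c)(c-a)$, which is nonzero exactly when $a,b,c$ are pairwise distinct, so $\phi\in\PGL(2,q)$.

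For the remaining cases where one of $a,b,c$ is $\infty$, I would handle them by the usual conventions for Möbius maps, or equivalently by the obvious degenerations of the formula above: e.g. if $c=\infty$ use $\phi(z)=(z-a)/(b-a)$, if $b=\infty$ use $\phi(z)=(z-a)/(z-c)$, and if $a=\infty$ use $\phi(z)=(b-c)/(z-c)$. Each of these is visibly in $\PGL(2,q)$ and sends the triple in question to $(0,1,\infty)$. Composing $\phi^{-1}$ for one triple with $\psi$ for another shows any triple can be mapped to any other.

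There is no real obstacle here; the only step requiring minor care is the verification that the cross-ratio matrix is invertible and that the degenerate cases involving $\infty$ are handled consistently. Once transitivity is in hand, the counting argument above immediately upgrades it to regularity, proving the proposition.
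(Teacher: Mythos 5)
Your argument is correct. Note, however, that the paper does not prove this proposition at all: it simply records it as a well-known fact with a pointer to Dixon and Mortimer (Exercises 2.8.2 and 2.8.7), so your write-up is a genuinely self-contained substitute rather than a variant of the paper's reasoning. The two ingredients you use are exactly the standard ones and both check out: the count $|\PGL(2,q)|=(q^2-1)(q^2-q)/(q-1)=(q+1)q(q-1)$ matches the number of ordered triples of distinct points of $\mathbb{P}^1(\mathbb{F}_q)$, so by orbit--stabilizer any transitive action is regular; and the cross-ratio map $\phi(z)=\frac{(z-a)(b-c)}{(z-c)(b-a)}$, whose representing matrix has determinant $(a-b)(b-c)(c-a)\neq 0$, carries $(a,b,c)$ to $(0,1,\infty)$, with the three degenerations for a point at infinity handled correctly. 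The only cosmetic remark is that transitivity plus the counting already gives regularity, so one could equally well phrase the whole thing as a direct bijection between $\PGL(2,q)$ and the set of triples via $g\mapsto g^{-1}(0,1,\infty)$; this buys nothing mathematically, and your version is perfectly adequate for the role the proposition plays in the paper (it is only used to rule out short cycles in $\PGL(2,5)$ and to count fixed points).
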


In throughout what follows we enumerate the projective points by
the elements of $\{1,\ldots,6\}$, so $\PGL(2,5)$ is embedded in $\Sym_n$,
$n\geq 6$. An element of $\Sym_n$ is a {\it  cycle} of length $m$, if it permutes $i_1,\ldots,i_m\in \{1,\ldots,n\}$ in the cyclic order and fixes every element of $\{1,\ldots,n\}\setminus \{i_1,\ldots,i_m\}.$

\begin{corollary}\label{coro1}
The group $\PGL(2,5)$ does not contain cycles of length $2$ or $3$.
\end{corollary}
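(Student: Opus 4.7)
The plan is to exploit Proposition \ref{PGLprop}: since $\PGL(2,5)$ acts regularly on ordered triples of distinct projective points, the only element of $\PGL(2,5)$ that fixes some ordered triple of distinct points pointwise is the identity. So I would reduce the statement to a counting observation: any $2$-cycle or $3$-cycle in $\Sym_6$ moves at most $3$ elements of $\{1,\ldots,6\}$, hence fixes at least $3$ of the six projective points.

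Concretely, suppose $\sigma \in \PGL(2,5)$ is a cycle of length $2$ or $3$ viewed as a permutation of the six projective points. Write $F \subseteq \{1,\ldots,6\}$ for its set of fixed points. Since $|F|\geq 6-3 = 3$, we may pick three distinct points $a,b,c \in F$. Then $\sigma$ sends the ordered triple $(a,b,c)$ to itself. By Proposition \ref{PGLprop} applied to the triple $(a,b,c)$, the identity is the unique element of $\PGL(2,5)$ with this property, so $\sigma = \mathrm{id}$, contradicting the assumption that $\sigma$ is a nontrivial cycle.

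There is no real obstacle here; the only thing to be slightly careful about is distinguishing ``cycle of length $m$'' as an element of $\Sym_n$ (a single $m$-cycle, fixing the remaining $n-m$ points) from the cycle structure of an arbitrary permutation — and this is precisely what the definition given just before the corollary ensures. Everything else is an immediate application of the regularity of the action on ordered triples.
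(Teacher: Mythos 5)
Your proof is correct and follows essentially the same route as the paper: both arguments note that a $2$- or $3$-cycle in $\Sym_6$ fixes at least three projective points and then invoke the regularity of the $\PGL(2,5)$-action on ordered triples (Proposition \ref{PGLprop}) to force such an element to be the identity. Your write-up just makes the counting and the choice of the fixed triple slightly more explicit than the paper does.
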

\begin{proof}
By Proposition \ref{PGLprop} the group
 $\PGL(2,5)$ is regular on the triples of the elements of
$\{1,\ldots,6\}$. In particular, any permutation of $\PGL(2,5)$ that has at least three fixed projective points is the identity.  We conclude that there are no cycles of length 2 or 3 in $\PGL(2,5)$ since they have three fixed points. 
\end{proof}

\begin{lemma}\label{lem_1}
Let $\pi$ be a permutation from $\Sym_n$, $n\geq 6$. Then $\pi
\PGL(2,5)$ is a code in $S_n$ with the minimum distance 3.
\end{lemma}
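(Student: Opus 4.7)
The plan is to translate the condition ``distance at most 2 in $S_n$'' into a condition on the cycle type of $g_1 g_2^{-1}$ for two vertices $g_1, g_2$ of $S_n$, and then exploit the fact that $\pi$-conjugation preserves cycle type.

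First I would unpack the Cayley graph structure. Two vertices $g_1, g_2$ of $S_n$ are adjacent precisely when $g_1 g_2^{-1} \in \{(1\ i) : 2 \leq i \leq n\}$, so $g_1$ and $g_2$ lie at distance at most $2$ if and only if $g_1 g_2^{-1}$ equals either the identity, a transposition $(1\ i)$, or a product $(1\ i)(1\ j) = (1\ j\ i)$ with $i \neq j$, i.e.\ a $3$-cycle containing the symbol $1$. Thus showing that $\pi \PGL(2,5)$ has minimum distance $\geq 3$ amounts to showing that for any two distinct elements $g_1, g_2$ of this coset, $g_1 g_2^{-1}$ is neither a $2$-cycle nor a $3$-cycle in $\Sym_n$.

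Next I would write $g_1 = \pi \alpha_1$ and $g_2 = \pi \alpha_2$ with $\alpha_1, \alpha_2 \in \PGL(2,5)$ and set $\beta = \alpha_1 \alpha_2^{-1}$. Then $g_1 g_2^{-1} = \pi \beta \pi^{-1}$, which, being a conjugate of $\beta$, has the same cycle type as $\beta$. If $g_1 \neq g_2$ then $\beta$ is a nonidentity element of $\PGL(2,5)$; since $\PGL(2,5) \subseteq \Sym_6 \subseteq \Sym_n$ fixes $\{7,\ldots,n\}$ pointwise, $\beta$ is in fact a permutation supported on $\{1,\ldots,6\}$, so the only way $\pi\beta\pi^{-1}$ could be a $2$-cycle or $3$-cycle is that $\beta$ itself is a $2$-cycle or a $3$-cycle.

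Finally, Corollary \ref{coro1} rules this out: $\PGL(2,5)$ contains no cycle of length $2$ or $3$. Hence $g_1 g_2^{-1}$ is neither a transposition nor a $3$-cycle, so $d(g_1,g_2) \geq 3$, completing the argument. There is no real obstacle here; the only subtle point to state carefully is the first paragraph's observation that all distance-$\leq 2$ products in $S_n$ are either trivial, transpositions of the form $(1\ i)$, or $3$-cycles of the form $(1\ j\ i)$, which is a direct consequence of the shape of the generating set.
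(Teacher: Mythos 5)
Your proposal is correct and follows essentially the same route as the paper: both reduce distance $\leq 2$ in $S_n$ to the quotient $g_1g_2^{-1}$ being a transposition $(1\ i)$ or a $3$-cycle $(1\ j\ i)$, observe that this quotient is a conjugate (by $\pi$) of a nonidentity element of $\PGL(2,5)$, and invoke Corollary \ref{coro1} to rule out $2$- and $3$-cycles. The only cosmetic difference is the direction of conjugation (the paper moves the transposition into $\PGL(2,5)$, you move the $\PGL(2,5)$ element out), which changes nothing.
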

\begin{proof}
Suppose that $\pi \pi'$ and $ \pi \pi''$ are adjacent in $S_n$,
$\pi'$, $\pi''\in \PGL(2,5)$. Then by the definition of the Star graph $S_n$ there is $x$, $2 \leq x\leq n$
such that $(1\mbox{ }x)\pi \pi'=\pi \pi''$, so $\pi^{-1}(1\mbox{ }x)\pi=\pi''(\pi')^{-1}$ is in
$\PGL(2,5)$. This contradicts Corollary \ref{coro1} because
$\pi^{-1}(1x)\pi$ is a transposition. If $\pi\pi'$ and $\pi \pi''$
are at distance 2 in $S_n$, then there are $x$ and $y$, $2\leq
x,y\leq n$, $x\neq y$ such that $\pi^{-1}(1\mbox{ }x)(1\mbox{ }y)\pi$ is in
$\PGL(2,5)$.   So, $\pi^{-1}(1\mbox{ }x)(1\mbox{ }y)\pi$  is a cycle of length 3, which contradicts Corollary \ref{coro1}.

\end{proof}


\begin{theorem}
The group $\PGL(2,5)$ is a perfect code in $S_6$ and the partitions of
$\Sym_6$ into the left and into the right cosets  by $\PGL(2,5)$ are  partitions of
the Star graph $S_6$ into perfect codes.
\end{theorem}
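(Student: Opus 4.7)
The plan is to verify the Hamming bound and then bootstrap from the already-established min-distance results to get both partitions simultaneously.

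First I would count degrees and sizes. The Star graph $S_6$ is regular of degree $5$ (the generating set consists of the transpositions $(1\ i)$ for $2\le i\le 6$), so the Hamming bound forces any perfect code to have size exactly $|\Sym_6|/6 = 720/6 = 120$. Since $|\PGL(2,5)|=120$, the cardinality is automatically right for $\PGL(2,5)$ and for every one of its left or right cosets. Combined with Lemma~\ref{lem_1} applied to $\pi = e$, this immediately gives that $\PGL(2,5)$ itself is a perfect code in $S_6$: it has minimum distance $3$ and meets the sphere-packing bound.

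Next I would handle the left cosets. For any $\pi\in\Sym_6$, Lemma~\ref{lem_1} gives that $\pi\PGL(2,5)$ has minimum distance $3$ in $S_6$, and $|\pi\PGL(2,5)|=120$, so each left coset is itself a perfect code. Since the left cosets partition $\Sym_6$ into $[\Sym_6:\PGL(2,5)]=6$ blocks, this is a partition of the vertex set of $S_6$ into perfect codes.

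For the right cosets I would use the automorphism group. By Theorem~\ref{Feng}, for every $r\in\Sym_6$ the map $\lambda_{e,r}\colon g\mapsto gr$ is an automorphism of $S_6$. Therefore each right coset $\PGL(2,5)\cdot r = \lambda_{e,r}(\PGL(2,5))$ is the image of the perfect code $\PGL(2,5)$ under a graph automorphism, hence is itself a perfect code. As the right cosets again partition $\Sym_6$ into $6$ blocks, this yields the second desired partition.

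There is no real obstacle here; the only subtlety is that left multiplication is not a graph automorphism in general (Theorem~\ref{Feng} only allows left multiplication by elements of $Stab_1(\Sym_6)$), which is precisely why Lemma~\ref{lem_1} is needed for the left-coset direction, whereas the right-coset direction follows directly from the automorphism structure.
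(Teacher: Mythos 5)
Your proposal is correct and follows essentially the same route as the paper: Hamming bound gives size $120=|\PGL(2,5)|$, Lemma~\ref{lem_1} handles $\PGL(2,5)$ and its left cosets, and right multiplication automorphisms from Theorem~\ref{Feng} handle the right cosets. The only difference is cosmetic: the paper additionally remarks that the two coset partitions are distinct since $\PGL(2,5)$ is not normal in $\Sym_6$, an observation not strictly required by the statement.
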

\begin{proof} The order of $\PGL(2,5)$ is $5!$, which is the size
of a perfect code in $S_6$ by the Hamming bound. Lemma 1 implies
that $\PGL(2,5)$ as well as any left coset of $\PGL(2,5)$ is a
perfect code. Since  the right multiplication by any element of $S_n$ is an automorphism of $S_n$ by Theorem \ref{Feng}, every right coset of $\PGL(2,5)$ is also a
perfect code. The partitions into the left and
right cosets are different because $\PGL(2,5)$ is not a normal
subgroup in $\Sym_6$.

\end{proof}

\section{Recursive construction for perfect codes in the Star graphs from $\PGL(2,5)$}

Let $C$ be a code in $S_{n}$. For a permutation $\sigma$ from $Sym(n)$ denote by $\sigma C=\{\sigma \pi:\pi \in C\}$. If $\sigma$ fixes 1 by Theorem \ref{Feng} the left multiplication by $\sigma$ is an automorphism of $S_{n}$  and therefore the set of distances between any two permutations of $C$ coincides with that of $\sigma C$.
In this section we show that a code in the Star graph $S_{n-1}$ with minimum distance three could be embedded into a code in the Star graph $S_{n}$ with minimum distance three by taking $(n-1)$
left multiplications of $C$ by transpositions. In particular, we obtain a new infinite series of perfect codes in the Star graphs $S_n$ from $\PGL(2,5)$ for any $n, n\geq 6$.

\begin{theorem}\label{main}
Let $C$ be a code with minimum distance 3 in $S_{n-1}$. Then the code $$C^n=C\cup\bigcup_{2\leq i \leq n-1}(i\mbox{ }n)C$$ is a code of size $|C|(n-1)$ with minimum distance 3.
\end{theorem}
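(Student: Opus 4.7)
The plan is to first establish the size and then verify the minimum distance via a case analysis on which cosets contain the two elements. I would identify $C\subseteq \Sym_{n-1}$ with its natural image in $\Sym_n$ (the elements fixing $n$). The sets $C$ and $(i\mbox{ }n)C$, $2\leq i\leq n-1$, are then pairwise disjoint, because their elements send $n$ to $n,2,\ldots,n-1$ respectively, which yields $|C^n|=(n-1)|C|$ immediately.

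For the minimum distance I would show that no two distinct elements $\alpha,\beta\in C^n$ lie at $S_n$-distance $1$ or $2$, distinguishing four cases. If $\alpha,\beta\in C$, tracking the action on $n$ shows that any product of at most two generators $(1\mbox{ }j)$ carrying $\alpha$ to $\beta$ uses only $j\leq n-1$, so it gives a path in $S_{n-1}$ and hence $d_{S_n}(\alpha,\beta)=d_{S_{n-1}}(\alpha,\beta)\geq 3$. If both lie in $(i\mbox{ }n)C$ for the same $i\in\{2,\ldots,n-1\}$, then $(i\mbox{ }n)\in Stab_1(\Sym_n)$, and by Theorem~\ref{Feng} left multiplication by $(i\mbox{ }n)$ is an automorphism of $S_n$, reducing to the previous case.

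The two remaining cases are more delicate. For $\alpha=\pi\in C$ and $\beta=(i\mbox{ }n)\pi'\in(i\mbox{ }n)C$, the action on $n$ already rules out distance one: $(1\mbox{ }j)\alpha(n)\in\{1,n\}$, while $\beta(n)=i\in\{2,\ldots,n-1\}$. For distance two, writing $\beta=(1\mbox{ }j)(1\mbox{ }k)\alpha$ and evaluating at $n$ forces $k=n$ and $j=i$; then the short computation $(i\mbox{ }n)(1\mbox{ }i)(1\mbox{ }n)=(1\mbox{ }i)$ gives $\pi'=(1\mbox{ }i)\pi$, putting $\pi,\pi'$ at $S_{n-1}$-distance one, contradicting the minimum distance of $C$. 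The last case, $\alpha\in(i\mbox{ }n)C$ and $\beta\in(i'\mbox{ }n)C$ with $i\neq i'$, proceeds analogously: evaluation at $n$ again rules out distance one (since $(1\mbox{ }j)(i)\in\{1,i\}$) and forces, for distance two, the pair $(k,j)=(i,i')$; then the identity $(i'\mbox{ }n)(1\mbox{ }i')(1\mbox{ }i)(i\mbox{ }n)=(1\mbox{ }i')(1\mbox{ }i)$ gives $\pi'=(1\mbox{ }i')(1\mbox{ }i)\pi$, so $\pi,\pi'$ are at $S_{n-1}$-distance at most two, again contradicting the minimum distance of $C$.

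The main obstacle is the bookkeeping in the last case: after using the action on $n$ to pin down the shape of the generator product, one must recognize that multiplying on the left by the coset representative $(i'\mbox{ }n)^{-1}$ cancels the appended $(i\mbox{ }n)$ and the conjugating pair exactly down to a two-generator product from $S_{n-1}$. It is precisely this collapse to two generators of $S_{n-1}$ that lets the minimum-distance hypothesis on $C$ finish the argument.
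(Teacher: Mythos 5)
Your proof is correct and follows essentially the same route as the paper: the paper partitions $\Sym_n$ into the subgraphs $\Gamma_i$ induced by the cosets $(i\ n)\Sym_{n-1}$ and uses a lemma saying each vertex of $\Gamma_i$ has a unique neighbor outside $\Gamma_i$, lying in the code-free copy $\Gamma_1$, which is exactly what your evaluation-at-$n$ argument establishes computationally. Your transposition identities $(i\ n)(1\ i)(1\ n)=(1\ i)$ and $(i'\ n)(1\ i')(1\ i)(i\ n)=(1\ i')(1\ i)$ check out, and the reduction to a distance-$\leq 2$ pair in $C$ matches the paper's concluding contradiction.
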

\begin{proof}

We introduce an auxilary notation and prove a technical result.
Let $\Gamma_i$ denote the subgraph of $S_n$ induced by the set of vertices $(i\mbox{ }n)\Sym_{n-1}$, $i\in {1,\ldots,n-1}$, $\Gamma_n$
denote the subgraph of $S_n$ induced by the vertices from $\Sym_{n-1}$.  Note that in \cite{PI} (see also \cite{MI}[Section 6]) a similar partition was considered for constructing a basis for eigenspace of $S_n$ corresponding to eigenvalue $n-2$.

\begin{lemma}\label{L1}
1. For any $i, 2\leq i\leq n$, $\Gamma_i$ is an isometric subgraph of $S_n$ that is isomorphic to $S_{n-1}$.
The set of vertices of $\Gamma_1$ is a perfect code in $S_n$.

2. Let $\pi$ be a permutation from $\Sym_{n-1}$. Then for any  i, $2\leq i\leq n-1$ the vertex $(i\mbox{ }n)\pi$ of $\Gamma_i$ has exactly one neighbor in $S_n$ outside of $\Gamma_i$ and it is the vertex (1\mbox{ }n)(1\mbox{ }i) of $\Gamma_1$.
The only neighbor of $\pi$ in $S_n$ outside $\Gamma_n$ is $(1\mbox{ }n)\pi$.
\end{lemma}
\begin{proof}
1. Obviously, the vertices of $\Sym_{n-1}$ induce an isometric subgraph of $S_n$ which is isomorphic to $S_{n-1}$. 
By Theorem \ref{Feng} the left multiplication by $(i\mbox{ }n)$ is an automorphism of $S_n$ for any $i\in \{2,\ldots,n\}$.
We conclude that $\Gamma_i$ are isomorphic copies of $S_{n-1}$ for any $i\in \{2,\ldots,n\}$. 
By Corollary \ref{CoroFeng} we have that  $(Stab_{1}(\Sym_n))(1\mbox{ } n)=(1\mbox{ } n)\Sym_{n-1}$ is a perfect code in $S_n$. Since this set is exactly the vertices of $\Gamma_1$, we obtain the required. 

2. Since  $\Gamma_i$ is isomorphic to $S_{n-1}$, it is $(n-2)$-regular for $i \in \{2,\ldots,n-1\}$. The remaining neighbor of $(i\mbox{ }n)\pi$ outside $\Gamma_i$ is the vertex $(1\mbox{ }i)(i\mbox{ }n)\pi=(1\mbox{ }n)(1\mbox{ }i)\pi$ of $\Gamma_1$.
\end{proof}

Obviously, the size of $C^n$ is $(n-1)|C|$. We now show that the minimum distance of $C^n$ is three.  We see that each of the graphs $\Gamma_i$ contains the copy $(i\mbox{ }n)C$ of the code $C$, for any $i\in\{2,\ldots ,n-1\}$ and $\Gamma_n$ contains $C$.
The distances between vertices from $(i\mbox{ }n)C$ are the same as those of $C$ in $S_{n-1}$. Therefore, it remains to show that the distances between the vertices of $(i\mbox{ }n)C$ and $(k\mbox{ }n )C$ and the distances  between  the vertices of $(i\mbox{ }n)C$ and $C$ are at least 3, for any distinct $i, k$ such that $2 \leq i,k\leq n-1$. By the second statement of Lemma \ref{L1}, these distances are at least 2.

Let $(i\mbox{ }n)\pi$ and $(k\mbox{ }n)\pi'$ be at distance 2, $\pi,\pi' \in C$. Then by the second statement of Lemma \ref{L1} they both have a common neighbor in $\Gamma_1$, which is $(1\mbox{ }n)(1\mbox{ }i)\pi=(1\mbox{ }n)(1\mbox{ }k)\pi'$.
This implies that $(1\mbox{ }i)(1\mbox{ }k)\pi'=\pi$ for $1\leq i,k \leq n-1$, or equivalently $\pi$ and $\pi'$ are at distance 2 in $S_{n-1}$. This contradicts the minimum distance of $C$.

Let $(i\mbox{ }n)\pi$ and $\pi'$ be at distance 2, $\pi,\pi' \in C$. By the second statement of Lemma \ref{L1} the only neighbor of $(i\mbox{ }\pi)$ outside of $\Gamma_i$ is $(1\mbox{ }n)(1\mbox{ }i)\pi$ and the only neighbor of $\pi'$ outside $\Gamma_n$  is $(1\mbox{ }n)\pi'$. So we see that  $(1\mbox{ }n)(1\mbox{ }i)\pi=(1\mbox{ }n)\pi'$, which contradicts 
the minimum distance of $C$.

\end{proof}

\begin{corollary}
For any $n\geq 6$ there is a perfect code in $S_n$ which is not isomorphic to $Stab_1(Sym_n)$.
\end{corollary}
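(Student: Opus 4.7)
The plan is to define inductively $C_6 := \PGL(2,5)$ and, for each $n \geq 7$, $C_n := C_{n-1}^n$ via the construction of Theorem~\ref{main}, and then show that every $C_n$ is a perfect code in $S_n$ that is not isomorphic to $Stab_1(\Sym_n)$.

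For perfectness I would induct on $n$. The base case $n = 6$ is the previous theorem. Assuming $C_{n-1}$ is a perfect code in $S_{n-1}$, we have $|C_{n-1}| = (n-1)!/(n-1) = (n-2)!$ and $C_{n-1}$ has minimum distance $3$. Theorem~\ref{main} then produces $C_n$ with minimum distance $3$ and size $(n-1)(n-2)! = (n-1)!$, which matches the Hamming bound $n!/n$ for $S_n$, so $C_n$ is perfect.

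For the non-isomorphism, I would appeal to Corollary~\ref{CoroFeng}, which identifies the isomorphism class of $Stab_1(\Sym_n)$ in $S_n$ with the collection of its right cosets $\{Stab_1(\Sym_n)\cdot g : g \in \Sym_n\}$. A right coset $Stab_1(\Sym_n)\cdot g$ contains the identity $e$ iff $g \in Stab_1(\Sym_n)$, and in that case it equals $Stab_1(\Sym_n)$ itself; hence the only code in this isomorphism class that contains $e$ is $Stab_1(\Sym_n)$. Therefore it suffices to show that $C_n$ contains $e$ but differs from $Stab_1(\Sym_n)$.

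The recursion $C_n \supseteq C_{n-1}$ gives $\PGL(2,5) = C_6 \subseteq C_n$, so $e \in C_n$ and $C_n$ contains all of $\PGL(2,5)$. By Proposition~\ref{PGLprop}, $\PGL(2,5)$ is regular on triples of projective points, hence in particular transitive on $\{1,\ldots,6\}$, so it contains some $\sigma$ with $\sigma(1) \neq 1$. Such a $\sigma$ lies in $C_n \setminus Stab_1(\Sym_n)$, which rules out equality and hence isomorphism. The argument is essentially routine; the only conceptual step is to use the identity as an invariant that singles out $Stab_1(\Sym_n)$ within its isomorphism class, and to observe that the first summand in the union defining $C^n$ is $C$ itself, so that the witness $\sigma \in \PGL(2,5)$ propagates through the recursion to all $n \geq 6$.
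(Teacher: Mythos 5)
Your proposal is correct and follows essentially the same route as the paper: iterate Theorem~\ref{main} starting from $\PGL(2,5)$, observe that $\PGL(2,5)$ survives as a subcode, and combine Corollary~\ref{CoroFeng} with the transitivity of $\PGL(2,5)$ on $\{1,\ldots,6\}$ (from Proposition~\ref{PGLprop}) to rule out being a right coset of $Stab_1(\Sym_n)$. The only cosmetic difference is that you normalize via the identity element whereas the paper directly notes that all members of a right coset send $1$ to the same point; your explicit inductive check of the Hamming bound is a welcome detail the paper leaves implicit.
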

\begin{proof}
Consider the code $D$ which is obtained by iteratively applying construction from Theorem \ref{main} $(n-6)$ times to the code $\PGL(2,5)$. By the construction, the code $\PGL(2,5)$ is a subcode of $D$.
Proposition \ref{PGLprop} implies that there are permutations $\pi, \pi'$ in $\PGL(2,5)$ such that $\pi(1)\neq \pi'(1)$.  By Corollary \ref{CoroFeng} the isomorphism class of  $Stab_1(Sym_n)$ in $S_n$ consists of its right cosets. Since we have that $\pi(1)=\pi'(1)$ for any $\pi$ and $\pi'$ from a right coset of $Stab_1(Sym_n)$, we conclude that $D$ is not isomorphic to $Stab_1(Sym_n)$. 
\end{proof}

We proceed with the following computational results for small Star graphs.

\begin{proposition}
1. The isomorphism class of $Stab_1(Sym_n)$ is the only isomorphism class of the perfect codes in $S_n$ for n=3,4,5.  \\
2. The isomorphism classes of $Stab_1(Sym_6)$ and $\PGL(2,5)$ are the only isomorphism classes of the perfect codes in $S_6$.
\end{proposition}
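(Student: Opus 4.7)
The plan is to recast the perfect code condition as a constrained binary feasibility problem, then use Theorem \ref{Feng} to reduce symmetry before solving by a computer search aided by linear programming. At the end every enumerated code must be sorted into orbits under the known automorphism group of $S_n$ and matched against the two candidate classes.

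First, I would write the perfect code condition as the linear system $(I+A_n)\chi_C=\mathbf{1}$, where $A_n$ is the adjacency matrix of $S_n$ and $\chi_C\in\{0,1\}^{\Sym_n}$ is the indicator of $C$. Since by Theorem \ref{Feng} every right multiplication is an automorphism of $S_n$, the graph is vertex-transitive, and I may assume without loss of generality that the identity $e$ lies in $C$; this reduces the enumeration by a factor of $n$. For $n\in\{3,4,5\}$ the ambient dimensions $n!\in\{6,24,120\}$ are small enough that the binary feasibility problem $x\in\{0,1\}^{n!}$, $(I+A_n)x=\mathbf{1}$, $x_e=1$ can be solved by exhaustive branch-and-bound with the LP relaxation used to kill infeasible partial assignments. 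Each solution is then classified into an orbit under the action of $Stab_1(\Sym_n)\times \Sym_n$ from Theorem \ref{Feng}, and one verifies a single orbit emerges, represented by $Stab_1(\Sym_n)$.

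For $n=6$ the search space is $\{0,1\}^{720}$ with codes of size $120$, and further pruning is needed. Here I would exploit Lloyd's theorem together with the known integral spectrum of $S_6$ (eigenvalues $-5,\ldots,5$): the vector $\chi_C-\frac{1}{6}\mathbf{1}$ must lie in the $(-1)$-eigenspace of $I+A_6$, giving strong distributional constraints on the eigenspace projections of $\chi_C$. Combining these with symmetry breaking (fix $e\in C$, quotient the search tree by the stabilizer of $e$ in $Aut(S_6)$) makes backtracking enumeration feasible. The remaining step is to sort the list into orbits under the left-right action from Theorem \ref{Feng} and verify that exactly two orbits appear, represented by $Stab_1(\Sym_6)$ and by $\PGL(2,5)$.

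The main obstacle is the case $n=6$, where the raw binary search space is too large for direct enumeration. Success depends on effective LP pruning during backtracking, a canonical form under $Aut(S_6)$ to avoid re-visiting the same code many times, and a correct isomorphism test that does not inflate the two genuine orbits with spurious duplicates arising from the left-right multiplication action.
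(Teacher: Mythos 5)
Your proposal is essentially the paper's own proof: both recast the problem as a binary linear programming feasibility search for perfect codes containing the identity (using vertex-transitivity from Theorem \ref{Feng}) and then sort the resulting solutions into isomorphism classes under the left--right automorphism action, the paper's only distinctive device for $n=6$ being a case split on whether the code contains the transposition $(2\mbox{ }3)$ rather than spectral pruning. One small correction to your pruning step: since $(I+A_6)\chi_C=\mathbf{1}$ and $S_6$ is $5$-regular, the vector $\chi_C-\frac{1}{6}\mathbf{1}$ lies in the kernel of $I+A_6$, i.e.\ in the $(-1)$-eigenspace of $A_6$, not in the $(-1)$-eigenspace of $I+A_6$.
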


\begin{proof}
 For  $n=3$ and $4$ the uniqueness of perfect code in $S_n$ could be shown by hand.
In case when $n=5$  and $6$ the result was obtained by binary linear programming. Because $S_n$ is a transitive graph, without restriction of generality, we can consider the  perfect codes containing the identity permutation.
In case $n=5$ there is one solution to the binary linear programming problem, which is  $Stab_1(Sym_n)$. 

Let $n$ be six. We consider any transposition that preserves 1, say $(2\mbox{ }3)$. By the definition of the Star graph, $(2\mbox{ }3)$ is at distance three from the identity permutation.
Now we split the set of all codes as follows: the codes that contain the permutation  $(2\mbox{ }3)$ and those that do not. We then solve two linear programming problems separately for these cases. There are 6 solutions (perfect codes) that does not contain $(2\mbox{ }3)$. These are $\PGL(2,5)$ and its five conjugations. When  $(2\mbox{ }3)$ is in the code, the returns with the only solution which is $Stab_1(Sym_n)$.

\end{proof}

\begin{proposition}
All perfect bitrades in $S_n$ are embeddable for $3\leq n\leq 6$. For $n\in\{3,4,5\}$ their volumes are equal to $(n-1)!$. For $n=6$ the volumes of bitrades are $120$, $100$ and $96$. 

\end{proposition}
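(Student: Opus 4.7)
The plan is to extend the integer linear programming strategy of the preceding proposition from perfect codes to perfect bitrades. For each $n$ with $3\le n\le 6$, introduce binary variables $x_g,y_g\in\{0,1\}$ for $g\in\Sym_n$, subject to disjointness $x_g+y_g\le 1$ and, for every $g\in\Sym_n$, the balanced closed-neighborhood constraints
\[
\sum_{h\in N[g]}x_h \;=\; \sum_{h\in N[g]}y_h \;\in\;\{0,1\},
\]
where $N[g]$ is the closed neighborhood of $g$ in $S_n$. By vertex-transitivity of $S_n$ one may assume the identity lies in $T_0$ (setting $x_e=1$), which selects one representative from each orbit under left multiplication; further orbit reduction is carried out via the automorphism group from Theorem \ref{Feng}. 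Enumerating the feasible solutions yields all perfect bitrades up to isomorphism.

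For $n\in\{3,4,5\}$, the preceding proposition identifies the only perfect codes as the sets $C_j=\{\pi\in\Sym_n:\pi(j)=1\}$, which form a partition of $\Sym_n$ into $n$ disjoint classes. Any unordered pair $(C_i,C_j)$ with $i\ne j$ is therefore a disjoint pair of perfect codes and gives a perfect bitrade of volume $(n-1)!$. The enumeration should return only bitrades of this form, simultaneously confirming the volume claim and embeddability, since $T_0$ and $T_1$ are themselves perfect codes in this range.

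For $n=6$, the three observed volumes correspond to the three possible sizes of the intersection $C\cap C'$ of two perfect codes from the classification of the preceding proposition. Disjoint pairs, such as two distinct right cosets of $Stab_1(\Sym_6)$ or two disjoint cosets of a $\PGL(2,5)$-type subgroup, give volume $120$. A $Stab_1$-type code paired with a $\PGL(2,5)$-type code has intersection of size $20$ by Proposition \ref{PGLprop} (the stabilizer in $\PGL(2,5)$ of a single projective point has order $|\PGL(2,5)|/6=20$), yielding volume $100$. Volume $96$ arises from two distinct $\PGL(2,5)$-type codes whose intersection is a subgroup of $\PGL(2,5)\cong\Sym_5$ of order $24$, namely a copy of $\Sym_4$. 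The main obstacle is computational: one must confirm that the linear programming enumeration for $S_6$ is exhaustive, and for each resulting bitrade $(T_0,T_1)$ exhibit perfect codes $C,C'$ with $T_0=C\setminus C'$ and $T_1=C'\setminus C$. Because the preceding proposition already provides the complete list of perfect codes in $S_n$ for $n\le 6$, this embedding verification reduces to a finite search over pairs $(C,C')$ of perfect codes.
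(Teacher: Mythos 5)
Your proposal follows essentially the same route as the paper: a computer enumeration (binary linear programming over $\Sym_n$, with symmetry reduction) to establish exhaustiveness and embeddability, combined with the same intersection computations for $n=6$ — disjoint codes give volume $120$, $|Stab_1(\Sym_6)\cap\PGL(2,5)|=20$ gives $100$, and the order-$24$ intersection of two conjugates of $\PGL(2,5)$ gives $96$. The only difference is presentational: you spell out the ILP formulation for bitrades and the final finite search over pairs of codes, which the paper leaves implicit.
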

\begin{proof}
The statement is obvious for $n=3$. Using linear programming approach by PC we found that for $n=4, 5, 6$ all bitrades are embeddable and have the corresponding volumes.
 When $n$ is $6$,   a perfect bitrade $(C\setminus C',C'\setminus C)$  has volume $120$ if 
$C$ and $C'$ are disjoint perfect codes, e.g. $Stab_1(\Sym_6)$ and $Stab_1(\Sym_6)(1\mbox{ }6)$.
By Proposition \ref{PGLprop} the group $\PGL(2,5)$ acts transitively on the set $\{1,\ldots,6\}$, so there are exactly $20$ permutations from $\PGL(2,5)$ that fix $1$.
So we see that a perfect bitrade $(C\setminus C',C'\setminus C)$ is of volume $100$ if $C$ is $Stab_1(\Sym_6)$  and $C'$ is $\PGL(2,5)$.
Finally, $(C\setminus C',C'\setminus C)$ is a perfect bitrade
of volume $96$ if $C$ is $\PGL(2,5)$ and $C'$ is one of its nontrivial conjugations. Indeed, $\PGL(2,5)$ is isomorphic to $\Sym_5$  via an outer automorphism of $\Sym_6$. 
Therefore the intersection of $\PGL(2,5)$  and its conjugation is a subgroup which is isomorphic to the intersection $\Sym_5$ and some of its conjugation $Stab_i(\Sym_6)$, $i \in \{1,\ldots,5\}$. Since the latter intersection is of order 4!=24, the proposition is true.

\end{proof}

{\bf Acknowledgements} The author gradually thanks Sergey Avgustinovich for introducing the problem and stimulating discussions.

\end{document}